\newcommand\mypagesizel{
\textwidth= 6.5in
\textheight=9in
\voffset-.55in
\hoffset -0.75in
\marginparwidth=56pt
}
\newcommand{\p}[0]{{\mathbb P}}
\newcommand{\N}{\textup{N}}
\newcommand{\NE}{\overline{\textup{NE}}}
\renewcommand{\phi}{\varphi}
\newcommand{\into}{\hookrightarrow}
\renewcommand{\le}{\leqslant}
\renewcommand{\ge}{\geqslant}
\newcommand{\sE}{\mathscr{E}}
\newcommand{\sF}{\mathscr{F}}
\newcommand{\sG}{\mathscr{G}}
\newcommand{\sL}{\mathscr{L}}
\newcommand{\sN}{\mathscr{N}}
\newcommand{\sO}{\mathscr{O}}
\newtheorem{thm}{Theorem}[section]
\newtheorem{question}[thm]{Question}
\newtheorem{lemma}[thm]{Lemma}
\newtheorem{cor}[thm]{Corollary}
\newtheorem{prop}[thm]{Proposition}
\newtheorem{conj}[thm]{Conjecture}
\newtheorem*{thm*}{Theorem}
\theoremstyle{definition}
\newtheorem{defn}[thm]{Definition}
\newtheorem{say}[thm]{}
\newtheorem{notation}[thm]{Notation}
\newtheorem{defn-thm}[thm]{Definition-Theorem} 
\newtheorem{defn-lemma}[thm]{Definition-Lemma}
\newtheorem{rem}[thm]{Remark}
\theoremstyle{remark}
\newtheorem*{not-and-def}{Notation and definitions}
\numberwithin{equation}{section}
\def\factor#1.#2.{\left. \raise 2pt\hbox{$#1$} \right/\hskip -2pt\raise -2pt\hbox{$#2$}}
\begin{document} 

\title[Regular foliations on weak Fano manifolds]{Regular foliations on weak Fano manifolds}

\author{St\'ephane \textsc{Druel}}

\address{St\'ephane Druel: Institut Fourier, UMR 5582 du
  CNRS, Universit\'e Grenoble 1, BP 74, 38402 Saint Martin
  d'H\`eres, France} 

\email{druel@ujf-grenoble.fr}


\subjclass[2010]{37F75}

\begin{abstract}
In this paper we prove that a regular foliation on a complex weak Fano manifold is algebraically integrable.
\end{abstract}

\maketitle

{\small\tableofcontents}

\section{Introduction}

In this paper, we provide some evidence for the following conjecture.

\begin{conj}[F. Touzet]
Let $X$ be a complex projective manifold, and let $\sF$ be a regular foliation on $X$. Suppose that $X$ is rationally connected. Then
$\sF$ is algebraically integrable.
\end{conj}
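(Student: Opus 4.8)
The plan is to establish the conjecture in the case at issue in this paper --- $X$ a weak Fano manifold, so $-K_X$ is nef and big --- the general statement requiring a substitute for the numerical positivity used below.

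Since $-K_X$ is nef and big, $X$ is rationally connected, so $\pi_1(X)=0$ and $\H^0(X,(\Omega^1_X)^{\otimes m})=0$ for every $m\ge 1$. Write the tangent sequence $0\to\sF\to T_X\to\sN_\sF\to 0$; regularity of $\sF$ means $\sN_\sF$ is locally free, say of rank $q:=\codim\sF$, and we may assume $1\le q\le\dim X-1$. Two tools will drive the argument. \emph{(1) Bott's vanishing theorem.} A regular foliation equips $\sN_\sF$ with the Bott partial connection along $\sF$, so every weighted-homogeneous polynomial of degree $>q$ in $c_1(\sN_\sF),\dots,c_q(\sN_\sF)$ vanishes in cohomology; in particular $c_1(\sN_\sF)^{q+1}=0$ in $\H^{2q+2}(X,\c)$, and therefore $c_1(\sN_\sF)^{\dim X}=0$. \emph{(2) Positivity criteria for algebraic integrability} (Bogomolov--McQuillan, Kebekus--Sol\'a Conde--Toma, Campana--P\u{a}un): they detect algebraic integrability from positivity of $\sF$ along rational curves and against movable curve classes, and they attach to $\sF$ an algebraic part $\sF^a\subseteq\sF$ --- an algebraically integrable subfoliation with rationally connected leaves --- such that $\sF/\sF^a$ carries no subsheaf of positive slope against movable classes; moreover $\sF^a=\sF$ precisely when $\sF$ is algebraically integrable.

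So suppose $\sF$ is not algebraically integrable, i.e.\ $\sF^a\subsetneq\sF$, and let $\varphi\colon X\map Y$ be the almost holomorphic map whose general fibre is the closure of a general leaf of $\sF^a$. Then $Y$, being dominated by $X$, is rationally connected, and the general fibre of $\varphi$ is rationally connected by the construction of $\sF^a$. The aim is to show that the weak Fano hypothesis forces $c_1(\sN_\sF)$ to be nef and big, which is absurd in view of $c_1(\sN_\sF)^{\dim X}=0$; this will give $\sF^a=\sF$. The ingredients are: $c_1(\sN_\sF)=-K_X-c_1(\sF^a)-c_1(\sF/\sF^a)$ with $-K_X$ nef and big; $c_1(\sF/\sF^a)$ nonpositive against every movable class, by (2); and $c_1(\sF^a)$ controlled through $\varphi$ --- expressing $\sF^a$ as the foliation induced by $\varphi$, one writes $c_1(\sF^a)$ in terms of $K_X$, $\varphi^{*}K_Y$ and a divisor supported over the locus where $\varphi$ is not a submersion, and one uses rational connectedness of $Y$ and of the fibres. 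One also restricts to a minimal dominating family of rational curves on $X$: along its general member $\ell$ one has $\ell^{*}T_X\cong\cO_{\p^1}(2)\oplus\cO_{\p^1}(1)^{\oplus p}\oplus\cO_{\p^1}^{\oplus(\dim X-1-p)}$, so $\ell^{*}\sN_\sF$, being a quotient, is globally generated.

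The crux --- the step I expect to cost the most work --- is exactly this implication ``$-K_X$ nef and big $\Rightarrow$ $c_1(\sN_\sF)$ nef and big''. It cannot be obtained formally: since the leaves of $\sF^a$ are rationally connected, $K_{\sF^a}$ is \emph{not} pseudoeffective, so the two contributions $-c_1(\sF^a)$ and $-c_1(\sF/\sF^a)$ pull in opposite directions and can compensate. Overcoming this requires combining $\varphi$, the family of minimal rational curves on $X$, and the Bott partial connection, which, being flat along $\sF$, also constrains how $c_1(\sN_\sF)$ can meet the leaves of $\sF^a$. Granting it, the proof closes: a nef and big class has positive top self-intersection, contradicting $c_1(\sN_\sF)^{\dim X}=0$; hence $\sF^a=\sF$ and $\sF$ is algebraically integrable, and the criteria in (2) further yield that the general leaf is rationally connected and Zariski closed while regularity of $\sF$ upgrades $\varphi$ to a smooth morphism.
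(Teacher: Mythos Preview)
First, a clarification: the statement is a conjecture, and the paper does not prove it in full; it establishes only the weak Fano case (Theorem~\ref{thm:main}). You correctly restrict to that case, so the comparison is between your sketch and the paper's proof of Theorem~\ref{thm:main}.

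Your route diverges from the paper's at the decisive point. You argue by contradiction via the algebraic part $\sF^a\subsetneq\sF$ and try to show that $c_1(\sN_\sF)$ is nef and \emph{big}, then play this against Bott vanishing $c_1(\sN_\sF)^{q+1}=0$. The paper never seeks bigness. It proves, unconditionally, that $\det(\sN)$ is \emph{nef} by a foliated bend-and-break (Proposition~\ref{prop:bend_and_break_foliation2} and Corollary~\ref{cor:conormal_nef}): if $\det(\sN)\cdot C<0$ for a rational curve $C$, one degenerates $C\equiv mC_1+Z$ with $\textup{Supp}(Z)$ tangent to $\sF$, hence $\det(\sN)\cdot Z=0$, and iterates to a contradiction on $H$-degree. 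Then the base-point-free theorem makes $\det(\sN)$ semiample, giving $\phi\colon X\to Y$; Baum--Bott forces $\dim Y\le q$, while the general fibre $F$ is weak Fano, hence has $h^0(F,\Omega_F^i)=0$, and Lemma~\ref{lemma:first_integral} shows $F$ is tangent to $\sF$ with $\dim Y\ge q$. So $\phi$ induces $\sF$, and a Reeb-stability argument (Lemma~\ref{lemma:smoothness}) makes $\phi$ smooth.

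Your proposal has a genuine gap, which you yourself flag: the implication ``$-K_X$ nef and big $\Rightarrow$ $c_1(\sN_\sF)$ nef and big when $\sF^a\subsetneq\sF$'' is not proved. In your decomposition $c_1(\sN_\sF)=-K_X-c_1(\sF^a)-c_1(\sF/\sF^a)$ the term $-c_1(\sF^a)=K_{\sF^a}$ is \emph{not} pseudoeffective (the leaves are rationally connected), and nothing in the outline bounds it. The Campana--P\u{a}un and Bogomolov--McQuillan tools you invoke yield pseudoeffectivity or positivity against movable classes, not nefness; and the observation that $\ell^*\sN_\sF$ is globally generated for a \emph{general} minimal rational curve $\ell$ does not control $\det(\sN)\cdot C$ for an arbitrary rational curve $C$. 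That is precisely the difficulty the paper's foliated bend-and-break is designed to overcome. Without a replacement for that step, your argument does not close; and once one has nefness of $\det(\sN)$, the paper's semiampleness-plus-fibre-analysis is both shorter and avoids the $\sF^a$ machinery altogether.
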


The statement is a tautology in the case of curves. For surfaces, it follows from the classification of foliation by curves on surfaces (\cite{brunella_classification}). It was also known to be true if $X$ is a rational homogeneous space (see \cite{ghys_classification} and \cite{bianco_jvp}). 
Our result is the following.
Recall that a \emph{weak Fano} manifold is
a complex projective manifold $X$ such that $-K_X$ is nef and big. 

\begin{thm}\label{thm:main}
Let $X$ be a complex weak Fano manifold, and let $\sF\subseteq T_X$ be a 
regular foliation. Then the foliation $\sF$ is given by the fibers of a smooth morphism
$X \to Y$ onto a projective manifold.
\end{thm}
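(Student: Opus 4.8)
The plan is to prove the theorem in two steps: first that $\sF$ is algebraically integrable, and then that the regularity of $\sF$ turns the resulting fibration by leaf closures into a smooth morphism onto a smooth projective variety $Y$. Two preliminary observations come from the hypotheses. Since $-K_X$ is nef and big, $X$ is rationally connected (Zhang; Hacon--McKernan), and writing $-K_X\equiv A+E$ with $A$ ample and $E\ge 0$, a general member $C$ of any covering family of curves avoids $\Supp E$, so $-K_X\cdot C>0$. Fix a minimal dominating family of rational curves; its general member $\ell$ is a standard rational curve, so $T_X|_\ell$ is globally generated, hence nef, and therefore so is the quotient $\sN|_\ell=(T_X/\sF)|_\ell$, where $\sN$ denotes the normal bundle. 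Finally, because $\sF$ is regular, $\sN$ is locally free and carries Bott's partial holomorphic connection along $\sF$; in particular $\sN|_C$ is flat, hence trivial, for every rational curve $C\cong\p^1$ tangent to $\sF$.

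\textbf{Step 1 (the heart): algebraic integrability.} I would argue with the algebraic part $\sF^{\mathrm{alg}}\subseteq\sF$ --- the maximal algebraically integrable subfoliation --- and its induced fibration, which after a birational model $\widetilde X\to X$ becomes a morphism $\widetilde\pi\colon\widetilde X\to Y$ whose general fiber is a leaf closure of $\sF^{\mathrm{alg}}$; the foliation $\sF$ then descends on $Y$ to a foliation $\sG$ having no positive-dimensional algebraic leaf, and the target is to show $\sG=0$. I would combine: (i) the numerics of $0\to\sF|_\ell\to T_X|_\ell\to\sN|_\ell\to 0$ over a standard minimal curve $\ell$, using $-K_X\cdot\ell>0$ --- if $\ell$ is not tangent to $\sF$, the subbundle $T_\ell\cong\cO_{\p^1}(2)$ maps nontrivially to the nef bundle $\sN|_\ell$, forcing $\sN$ to be positive along the minimal class, whereas if $\ell$ is tangent then $\sN|_\ell$ is trivial and $\deg\sF|_\ell=-K_X\cdot\ell>0$ --- together with (ii) the criterion, going back to Bogomolov--McQuillen and refined by Campana--P\u{a}un and Araujo--Druel, that a foliation admitting through a general point a tangent rational curve along which it is ample is algebraically integrable with rationally connected general leaf. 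Feeding (i) into (ii) should either finish Step 1 at once (some minimal dominating family is $\sF$-tangent with $\sF$ ample on its general member), or else reduce matters to excluding the transcendental foliation $\sG$ on $Y$, which I expect to do using the positivity of $-K_X$, Bott vanishing for $\sN$, and the pseudo-effectivity of the canonical class of the transcendental part of $\sF$ (Campana--P\u{a}un; Druel), if necessary by induction on $\dim X$.

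\textbf{Step 2: from algebraic integrability to a smooth morphism.} Granting Step 1, regularity forces every leaf to be compact: the closure $\bar L$ of a general leaf $L$ is an $\sF$-invariant subvariety of dimension $\operatorname{rk}\sF$, hence a union of leaves, all of that same dimension, so $\bar L\setminus L$ --- a proper closed subset --- cannot contain a leaf and is therefore empty. Thus the leaf-closure map $\pi\colon X\dashrightarrow Y$ is already proper with smooth, $\sF$-invariant fibers, and $\sF=\ker(d\pi)$ wherever $\pi$ is a submersion; regularity of $\sF$ then propagates the submersion property everywhere, so $\pi$ is a smooth morphism onto a smooth $Y$ whose fibers are exactly the leaves of $\sF$ --- which is the statement. (By easy addition the fibers inherit $-K$ nef and big, consistently with the conclusion.)

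\textbf{Main obstacle.} The crux is Step 1: Bott vanishing constrains $\sN$ only cohomologically, so the genuine difficulty is to convert the numerical positivity of $-K_X$ along minimal rational curves into ampleness of $\sF$ along a covering family of $\sF$-tangent rational curves --- or into a usable positivity input for $(Y,\sG)$ --- while keeping in mind that neither $\sF^{\mathrm{alg}}$ nor the descended foliation $\sG$ is known a priori to be regular.
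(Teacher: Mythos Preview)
Your Step~1 is a strategy, not a proof, and you rightly flag it as the obstacle. The dichotomy on a minimal rational curve $\ell$ does not close: if $\ell$ is tangent to $\sF$ then $\sN|_\ell$ is trivial and $\deg\sF|_\ell>0$, but from $T_X|_\ell\cong\cO(2)\oplus\cO(1)^p\oplus\cO^{n-1-p}$ with trivial quotient of rank $q$ you only get $\sF|_\ell\cong\cO(2)\oplus\cO(1)^p\oplus\cO^{n-1-p-q}$, which is nef and generally not ample, so the Bogomolov--McQuillan/Araujo--Druel criterion is not available as stated; if $\ell$ is transverse you extract some positivity of $\sN$ along $\ell$, but that is not an input for algebraic integrability of $\sF$. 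The fallback via $\sF^{\mathrm{alg}}$, a birational model $\widetilde X\to Y$, and Campana--P\u{a}un pseudo-effectivity of the descended $K_\sG$ remains a hope: you have not said how ``$-K_X$ nef and big'' survives the birational modification, how it constrains $K_\sG$ on $Y$, or why an induction terminates. Step~2 also skips a real point: ``regularity propagates the submersion property everywhere'' does not by itself exclude multiple fibers or nontrivial holonomy.

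The paper's route is entirely different and never touches $\sF^{\mathrm{alg}}$. A foliated bend-and-break (Proposition~\ref{prop:bend_and_break_foliation2}, Corollary~\ref{cor:conormal_nef}) shows $\sL:=\det\sN$ is nonnegative on every rational curve, hence nef by the cone theorem; the base-point-free theorem (using $-K_X$ nef and big) then makes $\sL$ semi-ample, giving $\phi\colon X\to Y$. Baum--Bott vanishing yields $\sL^{q+1}\equiv 0$, so $\dim Y\le q$; the general fiber $F$ is weak Fano by adjunction, hence carries no holomorphic forms (Zhang), and an elementary filtration argument (Lemma~\ref{lemma:first_integral}) forces $F$ tangent to $\sF$ with $\dim Y\ge q$. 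Thus $\dim Y=q$ and $\phi$ is a first integral for $\sF$. Smoothness of $\phi$ then comes from Reeb stability together with $\pi_1(F_0)=1$ for every reduced fiber $F_0$ (Zhang again), which kills the holonomy (Lemma~\ref{lemma:smoothness}) --- this is exactly the step your ``propagation'' sentence was hiding.
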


\begin{rem}In the setup of Theorem \ref{thm:main}, $Y$ is a weak Fano manifold by 
\cite[Theorem 1.1]{fujino_gongyo}.
\end{rem}

\begin{rem}
Let $n \ge 2$ be an integer, and let $\sF$ be a foliation on $\mathbb{P}^n$ induced by a general global holomorphic vector field. Then the leaf of $\sF$ through a general point in not algebraic. This shows that 
Theorem \ref{thm:main} is wrong if one drops the regularity assumption on $\sF$.
\end{rem}

In order to prove Theorem 1.1, we consider the normal bundle $\sN:=T_X/\sF$ of the foliation $\sF$. 
We show first that $\det(\sN)$ is nef. This follows from a foliated version of the bend-and-break lemma (see also Proposition \ref{prop:bend_and_break_foliation}). 

\begin{prop}\label{prop:bend_and_break_foliation2}
Let $X$ be a complex projective manifold, and let $\sF \subsetneq T_X$ be a regular foliation with normal bundle $\sN$. Let 
$C \subset X$ be a rational curve with $\det(\sN)\cdot C \neq 0$, and let $x$ be a point on $C$. 
If $- K_\sF\cdot C \ge 1$, then there exist a nonzero effective rational $1$-cycle $Z$ passing through $x$, a rational curve $C_1$, and a positive integer $m$ such that $C \equiv mC_1+Z$ and such that
$\textup{Supp}(Z)$ is tangent to $\sF$.
\end{prop}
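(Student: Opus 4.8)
The plan is to run a foliated version of Mori's bend-and-break, by degenerating a family of deformations of $C$ that is pinned at $x$ and moves only in directions prescribed by $\sF$. Let $f\colon\p^1\to X$ be the normalisation of $C$, chosen with $f(0)=x$, and set $\sG:=f^*\sF$. As $\sF$ is regular, $\sG$ is a subbundle of $f^*T_X$ of rank $r:=\operatorname{rank}\sF$, and $\deg\sG=-K_\sF\cdot C\ge 1$ by hypothesis. We may assume that $C$ is not tangent to $\sF$, the tangent case being covered by the companion Proposition~\ref{prop:bend_and_break_foliation}.

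First I would produce the family. First-order deformations of $f$ fixing the point $x=f(0)$ whose velocity field along $C$ lies in $\sF$ are parametrised by $H^0(\p^1,\sG\otimes\cO_{\p^1}(-1))$, and---using here, crucially, that $\sF$ is involutive (Frobenius: a regular foliation is locally the relative tangent bundle of a smooth fibration)---such ``$\sF$-directed'' deformations are obstructed only in $H^1(\p^1,\sG\otimes\cO_{\p^1}(-1))$. Hence the corresponding locus $\mathcal H\subseteq\operatorname{Hom}(\p^1,X;\,0\mapsto x)$ has dimension at least $\chi(\p^1,\sG\otimes\cO_{\p^1}(-1))=\deg\sG\ge 1$ at $[f]$. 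Since $C$ is not tangent to $\sF$, no nonzero section of $\sG\otimes\cO_{\p^1}(-1)$ is everywhere tangent to $C$, so $\mathcal H$ is not contained in the $\operatorname{Aut}(\p^1,0)$-orbit of $[f]$; a curve through $[f]$ in $\mathcal H$ then provides a smooth curve $B$ with smooth compactification $\overline B\supseteq B$ and a morphism $g\colon B\times\p^1\to X$ with $g(B\times\{0\})=\{x\}$, with each $g(b,-)$ a deformation of $f$, and with each horizontal curve $g(B\times\{s\})$, $s\ne 0$, tangent to $\sF$.

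Next I would run bend-and-break. Every irreducible curve on $\overline B\times\p^1$ has non-negative self-intersection, so any morphism $\overline B\times\p^1\to X$ that is generically finite onto its image is finite and contracts no curve; as $g$ contracts $\overline B\times\{0\}$ to $x$ while, by the previous paragraph, the family is non-isotrivial, $g$ cannot extend to $\overline B\times\p^1$. Resolving the indeterminacy over the boundary of $B$ and passing to the limit of the fibres then yields a numerical equivalence $C\equiv mC_1+Z$, in which $C_1$ is the image of the main component of the degenerate fibre (a rational curve, $m\ge 1$) and $Z\ne 0$ collects the images of the exceptional rational curves produced by the resolution. These exceptional curves are, by construction, limits of members of the $\sF$-tangent horizontal ruling of the swept surface, so $\operatorname{Supp}(Z)$ is tangent to $\sF$; and since the whole family is pinned at $x$, the bubbling it produces takes place at $x$, whence $x\in\operatorname{Supp}(Z)$.

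The main obstacle is the middle step: making the foliated deformation theory precise, so that $\sF$-directed first-order deformations genuinely integrate to a family whose horizontal ruling is tangent to $\sF$. This is exactly the point at which involutivity and regularity of $\sF$ enter, and it is the technical core of the foliated bend-and-break of Bogomolov--McQuillan and Araujo--Druel. Two further points need care: one must check that it is $Z$, and not $C_1$, that meets $x$---which uses that the family is both $\sF$-directed and pinned at $x$---and one must exclude the degenerate possibility that the family specialises to a multiple cover of a single curve with no genuinely new component; the hypothesis $\det\sN\cdot C\ne 0$, together with $-K_\sF\cdot C\ge 1$, is what guarantees an honest break with a nonzero $\sF$-tangent cycle $Z$.
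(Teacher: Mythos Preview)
Your approach is essentially the paper's: run a foliated bend-and-break using the dimension estimate $\dim_{[f]}\textup{Def}\big([f],\sF;f_{|\{p\}}\big)\ge -K_\sF\cdot C$, break via the rigidity lemma, and argue that the exceptional components of the degenerate fibre are tangent to $\sF$. Two points deserve sharpening, and both concern the hypothesis $\det(\sN)\cdot C\ne 0$, whose role you have not located precisely.

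First, you say the tangent case is ``covered by the companion Proposition~\ref{prop:bend_and_break_foliation}''. It is not: that proposition explicitly \emph{requires} transversality when $C\cong\mathbb{P}^1$, since otherwise the $\textup{Aut}(\mathbb{P}^1,c)$-orbit could swallow the whole deformation space and the rigidity argument collapses. The tangent case is in fact \emph{excluded} by hypothesis: if $f(\mathbb{P}^1)$ were tangent to $\sF$, the Bott partial connection on $\sN$ would pull back to a flat holomorphic connection on $f^*\sN$ (Lemma~\ref{lemma:normal_bundle_restricted_leaf}), forcing $\det(\sN)\cdot C=0$.

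Second, your last paragraph misidentifies what $\det(\sN)\cdot C\ne 0$ buys at the break. The cycle $Z$ is automatically nonzero once $ev$ fails to extend over $\overline{T}\times\mathbb{P}^1$; what is \emph{not} automatic is that the image of the strict transform of the central fibre is a curve rather than a point. The paper's argument is clean: since $\textup{Supp}(Z)$ is tangent to $\sF$, Lemma~\ref{lemma:normal_bundle_restricted_leaf} gives $\det(\sN)\cdot Z=0$, whence $\det(\sN)\cdot g_*\mathbb{P}^1=\det(\sN)\cdot C\ne 0$; thus $g$ is nonconstant, and one may set $C_1:=g(\mathbb{P}^1)$ and $m:=\deg g$.
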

From the base-point-free theorem, we conclude that $\det(\sN)$ is semi-ample. We then prove that the corresponding morphism $\phi\colon X \to Y$ yields a first integral for $\sF$ as follows. Let $F$ be a general fiber of $\phi$. By the adjunction formula, $F$ is a weak Fano manifold. In particular, $F$ does not carry 
differential forms. This easily implies that $F$ is tangent to $\sF$ (see Lemma \ref{lemma:first_integral}).
On the other hand, the Baum-Bott vanishing theorem yields $\dim Y \le \dim X - \textup{rank } \sF$, and hence
$\dim F = \textup{rank } \sF$, completing the proof of the claim.

\medskip

\noindent {\bf Acknowledgements.} We would like to thank 
Jorge V. \textsc{Pereira} and
Fr\'ed\'eric \textsc{Touzet} for helpful discussions.

\section{Recollection: Foliations}

In this section we recall the basic facts concerning foliations.

\subsection{Foliations}

\begin{defn}
A \emph{foliation} on a complex manifold $X$ is a coherent subsheaf $\sF\subseteq T_X$ such that
\begin{itemize}
	\item $\sF$ is closed under the Lie bracket, and
	\item $\sF$ is saturated in $T_X$. In other words, the quotient $T_X / \sF$ is torsion free.
\end{itemize}
The \emph{rank} $r$ of $\sF$ is the generic rank of $\sF$.
The \emph{codimension} of $\sF$ is defined as $q:=\dim X-r$. 
Let $X^\circ \subset X$ be the maximal open set where $\sF$ is a subbundle of $T_X$. 
We say that $\sF$ is \emph{regular} if $X^\circ=X$.

A \emph{leaf} of $\sF$ is a connected, locally closed holomorphic submanifold $L \subset X^\circ$ such that
$T_L=\sF_{|L}$. A leaf is called \emph{algebraic} if it is open in its Zariski closure.

The foliation $\sF$ is said to be \textit{algebraically
integrable} if its leaves are algebraic.
\end{defn}

\begin{defn}
Let $\sF$ be a foliation on a smooth variety $X$.
The \textit{canonical class} $K_{\sF}$ of $\sF$ is any Weil divisor on $X$ such that  $\sO_X(-K_{\sF})\cong \det(\sF)$. 
\end{defn}

\begin{say}[Foliations defined by $q$-forms] \label{q-forms}
Let $q$ and $n$ be positive integers.
Let $\sF$ be a codimension $q$ foliation on an $n$-dimensional complex manifold $X$.
The \emph{normal sheaf} of $\sF$ is $\sN:=(T_X/\sF)^{**}$.
The $q$-th wedge product of the inclusion
$\sN^*\into (\Omega^1_X)^{**}$ gives rise to a nonzero global section 
 $\omega\in H^0\big(X,\Omega^{q}_X\otimes \det(\sN)\big)$
 whose zero locus has codimension at least $2$ in $X$. 
Such $\omega$ is \emph{locally decomposable} and \emph{integrable}.
To say that $\omega$ is locally decomposable means that, 
in a neighborhood of a general point of $X$, $\omega$ decomposes as the wedge product of $q$ local $1$-forms 
$\omega=\omega_1\wedge\cdots\wedge\omega_q$.
To say that it is integrable means that for this local decomposition one has 
$d\omega_i\wedge \omega=0$ for every $i\in\{1,\ldots,q\}$. 
The integrability condition for $\omega$ is equivalent to the condition that $\sF$ 
is closed under the Lie bracket.

Conversely, let $\sL$ be a line bundle on $X$, $q\ge 1$, and 
$\omega\in H^0(X,\Omega^{q}_X\otimes \sL)$ a global section
whose zero locus has codimension at least $2$ in $X$.
Suppose that $\omega$  is locally decomposable and integrable.
Then  one defines 
a foliation of rank $r=n-q$ on $X$ as the kernel
of the morphism $T_X \to \Omega^{q-1}_X\otimes \sL$ given by the contraction with $\omega$. 
These constructions are inverse of each other. 
 \end{say}

We will need the following easy observation.

\begin{lemma}\label{lemma:first_integral}
Let $q$ be a positive integer, and 
let $\sF$ be a codimension $q$ foliation on a complex projective manifold $X$. Let 
$\phi\colon X \to Y$ be a surjective morphism with connected fibers onto a normal projective variety 
$Y$, with general fiber $F$. Set $\sN:=T_X/\sN$ and $\sL := \det (\sN)$.
Suppose that $\sL_{|F} \sim 0$ and that $h^0(F,\Omega_F^i)=0$ for all $1 \le i \le \dim F$. Then $F$ is tangent to $\sF$. In particular, we have $\dim Y \ge q$.
\end{lemma}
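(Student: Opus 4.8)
The plan is to exploit the foliation $q$-form $\omega \in H^0(X, \Omega_X^q \otimes \sL)$ from paragraph \ref{q-forms}, where $\sL = \det(\sN)$. First I would restrict $\omega$ to a general fiber $F$ of $\phi$. Since $\sL_{|F} \sim 0$ by hypothesis, the restriction $\omega_{|F}$ naturally lives in $H^0(F, \Omega_X^q{}_{|F} \otimes \sL_{|F}) \cong H^0(F, \Omega_X^q{}_{|F})$ after fixing a trivialization of $\sL_{|F}$. The idea is that the vanishing $h^0(F, \Omega_F^i) = 0$ for all $1 \le i \le \dim F$ should force this restricted form to vanish identically, which will in turn force $T_F \subseteq \sF_{|F}$, i.e.\ that $F$ is tangent to $\sF$.

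The key step is the following claim: if $h^0(F,\Omega_F^i) = 0$ for all $1 \le i \le \dim F$, then the natural restriction map $H^0(F, \Omega_X^q{}_{|F}) \to 0$ is forced, more precisely that any section of $\Omega_X^q{}_{|F}$ whose image in $H^0(F,\Omega_F^q)$ is relevant must die. To make this precise, I would use the conormal exact sequence of $F \subset X$, namely $0 \to N^*_{F/X} \to \Omega_X^1{}_{|F} \to \Omega_F^1 \to 0$, and its induced filtration on $\Omega_X^q{}_{|F}$ whose graded pieces are $\wedge^j N^*_{F/X} \otimes \Omega_F^{q-j}$ for $j = 0, \dots, q$. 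For a general fiber $F$, the conormal bundle $N^*_{F/X}$ is trivial of rank $\dim X - \dim F$ (since $\phi$ is smooth at a general point), so these graded pieces are direct sums of copies of $\Omega_F^{q-j}$. By the vanishing hypothesis, $h^0(F, \Omega_F^{q-j}) = 0$ whenever $1 \le q-j \le \dim F$; the only surviving graded piece is $j = q$, i.e.\ $\wedge^q N^*_{F/X}$, corresponding to $q - j = 0$ and $\Omega_F^0 = \sO_F$. Tracking through, this means $\omega_{|F}$, as a section, lies in the subsheaf $\wedge^q N^*_{F/X} \otimes \sO_F \subseteq \Omega_X^q{}_{|F}$ (after twisting by the trivial $\sL_{|F}$). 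But a section of $\Omega_X^q$ lying in $\wedge^q N^*_{F/X}$ along $F$ is precisely one that annihilates $T_F$ under contraction; equivalently, $T_F$ is contained in the kernel of the contraction map $T_X \to \Omega_X^{q-1} \otimes \sL$, which by the construction in \ref{q-forms} is exactly $\sF$. Hence $T_F \subseteq \sF_{|F}$, so $F$ is tangent to $\sF$.

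For the final clause $\dim Y \ge q$: once $F$ is tangent to $\sF$, we have $\dim F = \dim T_F \le \operatorname{rank}\sF = r = \dim X - q$, so $\dim Y = \dim X - \dim F \ge q$.

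\medskip

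The main obstacle I anticipate is making the filtration-and-graded-pieces argument clean: one must be careful that it is the \emph{image} of $\omega_{|F}$ in successive quotients that vanishes, working down the filtration from $\Omega_F^q$ through the intermediate pieces, and conclude that $\omega_{|F}$ itself factors through the bottom piece $\wedge^q N^*_{F/X}$. This requires knowing $h^0(F,\Omega_F^i)=0$ for \emph{every} intermediate $i$ with $1 \le i \le q \le \dim F$ (or handling $q > \dim F$ trivially, in which case $\Omega_X^q{}_{|F}$ already has no room and $\omega_{|F} = 0$ automatically, though then the subsequent contraction argument needs a small adjustment). A secondary subtlety is verifying that "$\omega_{|F}$ lies in $\wedge^q N^*_{F/X}$" genuinely translates into "$T_F \subseteq \ker(\lrcorner\, \omega) = \sF$" pointwise at a general point of $F$, and then deducing it along all of $F$ by saturatedness or by generality of $F$; this is a routine but necessary linear-algebra check at the level of fibers.
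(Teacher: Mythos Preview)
Your proposal is correct and matches the paper's proof essentially line for line: restrict the defining $q$-form $\omega$ to $F$, filter $\Omega_X^q{}_{|F}$ via the conormal exact sequence, use the vanishing hypotheses to force $\omega_{|F}$ into the bottom piece $\wedge^q N^*_{F/X}$, and read off $T_F \subseteq \sF_{|F}$ and hence $\dim Y \ge q$. (Your parenthetical about the case $q > \dim F$ is slightly off---$\Omega_X^q{}_{|F}$ certainly does not vanish then---but no special treatment is needed: the filtration argument already handles this uniformly since $\Omega_F^{q-j}=0$ whenever $q-j>\dim F$.)
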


\begin{proof}
Let $\omega\in H^0(X,\Omega_X^q\otimes\sL)$ be a twisted $q$-form defining $\sF$ (see \ref{q-forms}).
The short exact sequence
$$0 \to \sN_{F/X}^*\cong\sO_F^{\oplus \dim X - \dim Y} \to {\Omega_X^1}_{|F} \to \Omega^1_F \to 0 $$
yields a filtration
$$\{0\}=\sE_{q+1}\subseteq \sE_q \subseteq \cdots \subseteq \sE_0={\Omega_X^q}_{|F}$$
with 
$$\sE_i/\sE_{i+1}\cong \wedge^i\big(\sN_{F/X}^*\big) \otimes \Omega^{q-i}_F.$$
Since $h^0(F,\Omega_F^{q-i}) = 0$ for all $0\le i\le q-1$, 
we conclude that 
$$\omega_{|F}\in H^0(F,\sE_q)=H^0(F,\wedge^q(\sN_{F/X}^*)\subset H^0(F,{\Omega_X^q}_{|F}).$$
This implies that $q \ge \dim Y$ and that 
$\sN^*_{|F^\circ} \subset {\sN_{F/X}^*}_{|F^\circ}\subset {\Omega_X^1}_{|F^\circ}$, where
$X^\circ \subset X$ denotes the maximal open set where $\sF$ is a subbundle of $T_X$, and
$F^\circ := F \cap X^\circ$. 
Thus $T_{F^\circ} \subset \sF_{|F^\circ}$, proving the lemma. 
\end{proof}

\subsection{Bott (partial) connection}

\begin{say}\label{partial_connection}
Let $X$ be a complex manifold, let $\sF\subset T_X$ be a regular codimension $q$ foliation 
with $0 < q < \dim X$, and set $\sN=T_X/\sF$. Let $p\colon T_X\to \sN$ denotes the natural projection. For sections $U$ of $\sN$, $T$ of $T_X$, and $V$ of $\sF$ over some open subset of $X$ with 
$U=p(T)$, set $D_V U=p([V,U])$. This expression is well-defined,
$\sO_X$-linear in $V$ and satisfies the Leibnitz rule 
$D_V(fU)=fD_V U+(Vf)U$ so that $D$ is an $\sF$-connection on $\sN$
(see \cite{baum_bott70}). 
\end{say}

\begin{lemma}\label{lemma:normal_bundle_restricted_leaf}
Let $X$ be a complex manifold, and let $\sF\subsetneq T_X$ be a regular foliation 
with normal bundle $\sN=T_X/\sF$. Let $f\colon Z \to X$ be a compact manifold, and 
suppose that $f(Z)$ is tangent to $\sF$. Then $f^*\sN$ admits a holomorphic flat connection. In particular,  characteristic classes of $f^*\sN$ vanish.
\end{lemma}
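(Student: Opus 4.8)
The plan is to exploit the Bott partial connection $D$ on $\sN$ from \ref{partial_connection}. The key point is that $D$ is \emph{flat}: for local sections $V_1,V_2$ of $\sF$ and $T$ of $T_X$, the curvature operator $R_D(V_1,V_2)=D_{V_1}D_{V_2}-D_{V_2}D_{V_1}-D_{[V_1,V_2]}$ sends $p(T)$ to $p\big([V_1,[V_2,T]]-[V_2,[V_1,T]]-[[V_1,V_2],T]\big)$, which vanishes by the Jacobi identity. (This computation is legitimate precisely because $\sF$ is closed under the Lie bracket, so that $[V_i,T]$ is a legitimate representative of $D_{V_i}p(T)$ and $[V_1,V_2]$ is again a section of $\sF$.) Hence the curvature of $D$, an $\mathrm{End}(\sN)$-valued leafwise $2$-form, is identically zero.

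Next I would transport $D$ to $Z$. Since $f(Z)$ is tangent to $\sF$ and $\sF$ is a subbundle of $T_X$, the differential $df\colon T_Z\to f^*T_X$ factors through $f^*\sF$; dualizing the resulting map gives a morphism $\alpha\colon f^*\sF^*\to\Omega^1_Z$. The pull-back of $D$ along $f$ is a $\mathbb{C}$-linear operator $f^*\sN\to f^*\sN\otimes_{\sO_Z}f^*\sF^*$ satisfying the corresponding partial Leibniz rule; postcomposing its $f^*\sF^*$-factor with $\alpha$ yields an honest holomorphic connection $\nabla\colon f^*\sN\to f^*\sN\otimes_{\sO_Z}\Omega^1_Z$. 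Working in a local frame one checks that the curvature of $\nabla$ is the image under $\mathrm{id}\otimes\wedge^2\alpha$ of the pull-back of the curvature of $D$ — here one uses that $\alpha$, extended to exterior powers, intertwines the leafwise de Rham differential with $d$, which is again just the integrability of $\sF$. By the previous paragraph this curvature vanishes, so $\nabla$ is flat.

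Finally, a holomorphic vector bundle carrying a flat holomorphic connection is associated to a $\mathbb{C}$-local system, and Chern--Weil theory applied to $\nabla$ (whose curvature is zero) shows that all Chern classes, and hence all characteristic classes, of $f^*\sN$ vanish in $H^\bullet(Z,\mathbb{C})$. The conceptual content sits entirely in the first paragraph — the Bott connection of a regular foliation is flat along the leaves, so once $f$ maps into the leaves (that is, once $df$ takes values in $f^*\sF$) this partial flat connection becomes a genuine flat connection. The only step needing care, and the natural place for an error to hide, is the pull-back in the second paragraph: one must verify that a partial connection pulls back to an honest connection and that flatness survives. (Alternatively one can bypass connections: by the holomorphic Frobenius theorem $\sN$ is, locally on $X$, trivialized by a frame whose transition matrices depend only on the transverse coordinates, and these become locally constant upon restriction to $f(Z)$, exhibiting $f^*\sN$ as a flat vector bundle directly.)
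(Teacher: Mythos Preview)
Your argument is correct and is essentially a detailed expansion of the paper's own proof, which reads in full: ``This follows from \ref{partial_connection} and \cite{atiyah57}.'' The paper is invoking exactly the mechanism you spell out --- the Bott partial connection of \ref{partial_connection} pulled back along $f$ becomes an honest holomorphic connection on $f^*\sN$ (because $df$ factors through $f^*\sF$), it is flat by the Jacobi identity, and then Atiyah's work supplies the vanishing of characteristic classes. Your write-up simply makes explicit what the two citations are meant to convey.
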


\begin{proof}
This follows from \ref{partial_connection} and \cite{atiyah57}.
\end{proof}

\section{Deformations of a morphism along a foliation}

In this section, we provide a technical tool for the proof of the main result (see Corollary \ref{cor:extremal_rays}).

\begin{say} Let $Z$, $Y$ and $X$ be normal complex projective varietes, and let $g \colon Z \to X$ be a morphism. Let $\textup{Hom}\big(Y,X\big)$ denotes the space of morphisms $f \colon Y \to X$, and let
$\textup{Hom}\big(Y,X;g\big)\subset \textup{Hom}\big(Y,X\big)$ denotes the Zariski closed subspace parametrizing
morphisms $f \colon Y \to X$ such that $f_{|Z}=g$ (see \cite[Proposition 1]{mori}).

\medskip

Suppose now that $Z$, $Y$ and $X$ are complex projective manifolds, and consider a codimension $q$ regular foliation 
$\sF \subseteq T_X$ on $X$ with $0<q<\dim X$. 
Pick $[f] \in \textup{Hom}\big(Y,X\big)$. 
Let $\textup{Def}\big([f],\sF\big)$
denotes the germ of analytic space parametrizing
small
deformations of $[f]$ along $\sF$. It is constructed as follows (see \cite[Sect. 6]{miyaoka87}, or
\cite[Corollary 5.6]{kebekus_kousidis_lohmann}). Choose an open cover $(U_i)_{i\in I}$ of $X$ with respect to the Euclidean topology such that, for each $i \in I$, $\sF_{|U_i}$ is induced by a holomorphic submersion 
$\phi_i\colon U_i \to W_i$ of complex analytic spaces. 
Let $(V_j)_{j\in J}$ be a finite open cover of $Y$. By replacing 
$(V_j)_{j\in J}$ with a refinement, we may assume that, for each $j\in J$,
there exist $i_j\in I$ and an open neighborhood $H_j$ of $[f]$ such that
$h(y) \in U_{i_j}$ for each $[h] \in H_j$ and each $y \in V_j$.
Let $\textup{Def}\big([f],\sF\big)$ be the connected component of the intersection 
$$\bigcap_{j\in J} \Big\{[h]\in H_j\quad |
\quad \phi_{i_j}\circ (h_{|V_j})=\phi_{i_j}\circ ({f}_{|V_j})\Big\} $$
which contains $[f]$. Notice that $\textup{Def}\big([f],\sF\big)$ is a locally closed (possibly nonreduced)
analytic subset. 
Set $$\textup{Def}\big([f],\sF;g\big)=\textup{Def}\big([f],\sF\big) \cap \textup{Hom}\big(Y,X;g\big).$$
\end{say}

\begin{rem} Let $\phi\colon X \to Y$ be a surjective morphism with connected fibers of projective manifolds, 
let $Z$ be a projective manifold, and let 
$f \colon Z \to X$ be a morphism. Let $\sF$ be the foliation on $X$ given by the fibers of $\phi$. Recall that the space of deformations of $[f]$ over $Y$ are parametrized by the fiber 
$\textup{Hom}_Y\big(Z,X\big)$ of $[\phi\circ f]$
under the map
$$\textup{Hom}\big(Z,X\big) \to \textup{Hom}\big(Z,Y\big).$$
Suppose that $\sF$ is regular. Then we have an embedding 
$\Big(\textup{Def}\big([f],\sF\big),[f]\Big) \subseteq \Big(\textup{Hom}_Y\big(Z,X\big),[f]\Big)$
of pointed analytic spaces but they are not isomorphic in general. Indeed, suppose that 
$\dim Y =1$. Let $y$ be a point on $Y$, and set $F:=\phi^{-1}(y)_{\textup{red}}$. Suppose that 
the multiplicity $m$ of $F$ is $> 1$. Let $Z$ be a reduced point $\{z\}$, and suppose that $f(z) \subset F$.
Then $\Big(\textup{Def}\big([f],\sF\big),[f]\Big) \cong \big(F,z\big) \cong \Big(\textup{Hom}_Y\big(Z,X\big)_{\textup{red}},[f]\Big)$
while $\Big(\textup{Hom}_Y\big(Z,X\big),[f]\Big)\cong \big(\phi^{-1}(y),z\big)$.
\end{rem}

The following observation will prove to be crucial.
It is due to Loray, Pereira and Touzet (see proof of \cite[Proposition 6.12]{loray_pereira_touzet}).

\begin{notation}
Let $(A,a)$ be a pointed analytic space. We denote by $\widehat{A}$ the formal completion of $A$ at $a$.
Given a morphism of pointed analytic spaces $\lambda\colon (A,a) \to (B,b)$, we denote by 
$\widehat{\lambda}\colon \widehat{A} \to \widehat{B}$ the induced morphism of formal analytic spaces.
\end{notation}

\begin{lemma}\label{lemma:algebraicity}
Let $Y$ and $X$ be complex projective manifolds, and let $\sF \subseteq T_X$ be a regular foliation. Let $f \colon Y \to X$ be a morphism, and let $y$ be a point on $Y$. 
Then the Zariski closure $T$ 
of ${\textup{Def}\big([f],\sF;f_{|\{y\}}\big)}_{\textup{red}}$
in ${\textup{Hom}\big(Y,X;f_{|\{y\}}\big)}_{\textup{red}}$
parametrizes deformations of $[f]$ along $\sF$. In other words, for each $y' \in Y$, 
$ev\big(T \times \{y'\}\big)$ is tangent to $\sF$, where 
$ev \colon \textup{Hom}\big(Y,X;f_{|\{y\}}\big) \times Y \to X$ denotes the 
evaluation morphism. 
\end{lemma}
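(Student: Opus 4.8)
The plan is to show that the ``tangency to $\sF$'' condition propagates from the analytic germ $\textup{Def}\big([f],\sF;f_{|\{y\}}\big)$ to its Zariski closure $T$ by an argument of algebraicity of the tangency locus, following Loray--Pereira--Touzet. First I would unwind the definition: a point $[h]$ of $\textup{Hom}\big(Y,X;f_{|\{y\}}\big)$ lies in a subvariety whose generic point deforms $[f]$ along $\sF$ precisely when the image $ev\big(\{[h]\}\times Y\big)$ is tangent to $\sF$; equivalently, the composite $Y \xrightarrow{h} X$ factors the tangent map through $\sF$, i.e. $dh\big(T_Y\big)\subseteq h^*\sF \subseteq h^*T_X$. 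Since $\sF$ is a \emph{subbundle} of $T_X$ (regularity!), the condition $dh\big(T_{Y,y'}\big)\subseteq \sF_{h(y')}$ at a point $y'$ is a closed algebraic condition on $[h]$: it is the vanishing of the composite $T_Y \to h^*T_X \to h^*\big(T_X/\sF\big)=h^*\sN$, a section of $\mathcal{H}om\big(T_Y, h^*\sN\big)$ that varies algebraically in the family over $\textup{Hom}\big(Y,X\big)$. Hence the locus
$$W:=\Big\{[h]\in \textup{Hom}\big(Y,X;f_{|\{y\}}\big)\ \Big|\ ev\big(\{[h]\}\times Y\big)\text{ is tangent to }\sF\Big\}$$
is Zariski closed.

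Next I would check that $\textup{Def}\big([f],\sF;f_{|\{y\}}\big)\subseteq W$ as germs. This is where the local description of $\textup{Def}\big([f],\sF\big)$ via the submersions $\phi_i\colon U_i \to W_i$ enters: if $[h]$ is a deformation of $[f]$ along $\sF$, then on each $V_j$ one has $\phi_{i_j}\circ\big(h_{|V_j}\big)=\phi_{i_j}\circ\big(f_{|V_j}\big)$, so differentiating, $dh\big(T_{V_j}\big)\subseteq \ker\big(d\phi_{i_j}\big)=\sF_{|U_{i_j}}$ pulled back along $h$; thus $h(V_j)$ is tangent to $\sF$, and this is true for all $j$, so $ev\big(\{[h]\}\times Y\big)$ is tangent to $\sF$. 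Therefore the underlying reduced germ of $\textup{Def}\big([f],\sF;f_{|\{y\}}\big)$ is contained in the reduced scheme $W$, and since $W$ is Zariski closed, the Zariski closure $T$ of ${\textup{Def}\big([f],\sF;f_{|\{y\}}\big)}_{\textup{red}}$ is also contained in $W$. Unravelling the definition of $W$ once more, this says exactly that for every $[h]\in T$ and every $y'\in Y$ the point $ev\big([h],y'\big)$ — more precisely the image $ev\big(\{[h]\}\times Y\big)$, hence in particular $ev\big(T\times\{y'\}\big)$ which is a union of such images — is tangent to $\sF$, which is the assertion.

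I expect the main subtlety to be the verification that $W$ is genuinely Zariski closed in $\textup{Hom}\big(Y,X;f_{|\{y\}}\big)$ rather than merely closed in the Euclidean topology: one must exhibit the tangency condition as the vanishing of a morphism of coherent sheaves over the (quasi-projective) $\hom$-scheme, using that $\sF\subseteq T_X$ is an honest subbundle so that $\sN=T_X/\sF$ is locally free and $h^*\sN$ makes sense fibrewise in families. Concretely, over $\textup{Hom}\big(Y,X\big)\times Y$ with universal morphism $F$ one gets a section of $\mathcal{H}om_{Y}\big(T_Y, F^*\sN\big)$ whose scheme-theoretic zero locus, intersected with the fibre over $\textup{Hom}\big(Y,X;f_{|\{y\}}\big)$ and pushed forward, is $W$; standard properness of $\textup{Hom}$-schemes between projective varieties then guarantees this is algebraic. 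The rest — the inclusion of germs and passing to the closure — is formal once the local picture of $\textup{Def}\big([f],\sF\big)$ is in hand.
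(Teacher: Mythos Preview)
There is a genuine misreading of the statement. The conclusion asks that for each fixed $y'\in Y$ the subset $ev\big(T\times\{y'\}\big)=\{h(y'):[h]\in T\}\subset X$ is tangent to $\sF$; that is, as the parameter $[h]$ moves in $T$, the image point $h(y')$ moves along a leaf. You have instead imposed the condition that each individual map $h$ has image tangent to $\sF$, namely $dh(T_Y)\subseteq h^*\sF$. These are different directions of tangency, and in the applications (e.g.\ Proposition~\ref{prop:bend_and_break_foliation}) the original map $f$ is explicitly \emph{transverse} to $\sF$, so $[f]\notin W$ and your argument cannot even get started. Your differentiation step reflects the same confusion: from $\phi_{i_j}\circ h=\phi_{i_j}\circ f$ one obtains $d\phi_{i_j}\circ dh=d\phi_{i_j}\circ df$, which does \emph{not} give $dh(T_{V_j})\subseteq\ker d\phi_{i_j}$ unless $f$ was already tangent to $\sF$.

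The paper's argument exploits instead the one point where all maps in the relevant $\hom$-space agree, namely $y$ with $h(y)=f(y)=x$. Choosing a local submersion $\phi\colon U\to W$ defining $\sF$ near $x$, one imposes the \emph{formal} condition $\widehat{\phi\circ h}=\widehat{\phi\circ f}\colon\widehat{Y}\to\widehat{W}$ at $y$; this is an infinite collection of polynomial equations in Taylor coefficients, hence Zariski closed, and defines a set $\widehat{T}\supseteq T$. For $[h]\in\widehat{T}$ and $[h']$ in a small Euclidean neighbourhood $H\subset\widehat{T}$, the identity principle gives $\phi\circ h'_{|V}=\phi\circ h_{|V}$ on a connected neighbourhood $V$ of $y$, so for each $y'\in V$ the map $[h']\mapsto h'(y')$ lands in a single fibre of $\phi$, i.e.\ $ev\big(H\times\{y'\}\big)$ is tangent to $\sF$. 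Finally, the set of $y'\in Y$ for which $ev\big(\widehat{T}\times\{y'\}\big)$ is tangent to $\sF$ is Zariski closed (here regularity of $\sF$ is used as you suggested, but for the differential of $ev$ in the $\widehat{T}$-direction), and it contains the Euclidean open $V$, hence all of $Y$.
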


\begin{proof}
Set $x:=f(y)$, and let $U$ be an open neighborhood of $x$ in $X$ 
with respect to the Euclidean topology
such that $\sF_{|U}$ is induced by a submersion
$\phi \colon U \to W$ of complex analytic spaces. 
Let $\widehat{T}$
be the connected component containing $[f]$ of the \textit{Zariski closed} subset 
$$\Big\{[h]\in {\textup{Hom}\big(Y,X;f_{|\{y\}}\big)}_{\textup{red}}\quad |\quad 
\widehat{\phi \circ h}=\widehat{\phi \circ f}\colon \widehat{Y}\to \widehat{W}   
\Big\} \subset {\textup{Hom}\big(Y,X;f_{|\{y\}}\big)}_{\textup{red}}.$$
Notice that $T \subset \widehat{T}$.
Let $[h]\in \widehat{T}$, and consider an open neighborhood 
$V$ of $y$ and an open neighborhood $H$ of $[h]$ in $\widehat{T}$
(with respect to the Euclidean topology)
such that for each $[h'] \in H$ and each $y'\in V$, we have $h'(y')\in U$.
If $[h']\in H$, then $$\phi\circ ({h'}_{|V})=\phi\circ ({h}_{|V})\colon V \to W \quad\textup{since}\quad
\widehat{\phi \circ h'}=\widehat{\phi \circ f}=\widehat{\phi \circ h}\colon \widehat{Y}\to \widehat{W}.$$ 
This implies that
$ev\big(H\times\{y'\}\big)$ is tangent to $\sF$ for each $y' \in V$, and hence so is 
$ev\big(\widehat{T}\times\{y'\}\big)$. 
Since the set of points $y'\in Y$ such that $ev\big(\widehat{T}\times\{y'\}\big)$ is tangent to $\sF$ is Zariski closed in
$Y$, 
we conclude that $ev\big(\widehat{T}\times\{y'\}\big)$ is tangent to $\sF$ for any $y' \in Y$. This proves the lemma.
\end{proof}

\begin{rem}
One might ask whether Lemma \ref{lemma:algebraicity} holds for a larger class of foliations. What we
actually proved is the following. If $\sF$ is induced on an open neighborhood $U$ of $y$ (with respect to the
Euclidean topology) by a holomorphic map $U \to V$ of complex spaces, then the conclusion of Lemma \ref{lemma:algebraicity} holds.
 \end{rem}

The following lemma provides a lower bound for the dimension of $\textup{Def}([f],\sF;f_{|B})$ at a point $[f]$, thereby allowing us in certain situations to produce many deformations of $f$ (see Proposition \ref{prop:bend_and_break_foliation2}). 

\begin{lemma}\label{lemma:dimension}
Let $X$ be a complex projective manifold, and let $\sF \subseteq T_X$ be a regular rank $r$ foliation on $X$. Let $f \colon C \to X$ be a smooth curve, and let $B$ be a finite subscheme of $C$. Then 
$$\dim_{[f]} \textup{Def}\big([f],\sF;f_{|B}\big) \ge -K_\sF \cdot f_*C + \big(1-g(C)-\ell(B)\big)\cdot r.$$
\end{lemma}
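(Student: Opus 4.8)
The plan is to estimate the dimension of the space of deformations of $[f]$ along $\sF$ by a standard infinitesimal/obstruction argument, adapted from Mori's bend-and-break setup, with the foliation constraint encoded by replacing $f^*T_X$ with $f^*\sF$.

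\medskip

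\noindent\textbf{Approach.} First I would identify the relevant deformation theory. The space $\textup{Hom}(C,X;f_{|B})$ of morphisms fixing $f$ on $B$ has, at $[f]$, tangent space $H^0\big(C, f^*T_X\otimes\sI_B\big)$ and obstruction space contained in $H^1\big(C, f^*T_X\otimes\sI_B\big)$, so that
$$\dim_{[f]}\textup{Hom}(C,X;f_{|B}) \ge h^0\big(C,f^*T_X\otimes\sI_B\big) - h^1\big(C,f^*T_X\otimes\sI_B\big) = \chi\big(C,f^*T_X\otimes\sI_B\big).$$
The subscheme $\textup{Def}([f],\sF;f_{|B})$ is cut out inside $\textup{Hom}(C,X;f_{|B})$ by the local conditions $\phi_{i_j}\circ(h_{|V_j}) = \phi_{i_j}\circ(f_{|V_j})$; since each $\phi_i$ is a submersion with fibers the leaves of $\sF$, its differential identifies the normal directions to these conditions with sections of $f^*\sN$. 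Concretely, I expect the tangent space to $\textup{Def}([f],\sF;f_{|B})$ at $[f]$ to be $H^0\big(C, f^*\sF\otimes\sI_B\big)$ and the obstructions to lie in $H^1\big(C, f^*\sF\otimes\sI_B\big)$, because deforming $f$ along $\sF$ is, infinitesimally, choosing a section of $f^*\sF$ vanishing on $B$, and the integrability of $\sF$ guarantees such infinitesimal deformations are unobstructed except for the $H^1$-contribution coming from the bundle $f^*\sF$ itself. Hence
$$\dim_{[f]}\textup{Def}\big([f],\sF;f_{|B}\big) \ge \chi\big(C, f^*\sF\otimes\sI_B\big).$$

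\medskip

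\noindent\textbf{Computation.} By Riemann--Roch on the smooth curve $C$, writing $r=\textup{rank}\,\sF$ and $\ell(B)$ for the length of $B$,
$$\chi\big(C, f^*\sF\otimes\sI_B\big) = \deg\big(f^*\sF\big) - r\,\ell(B) + r\big(1-g(C)\big) = -K_\sF\cdot f_*C + \big(1-g(C)-\ell(B)\big)\cdot r,$$
since $\deg(f^*\sF) = \deg f^*\det\sF = -K_\sF\cdot f_*C$ and $\sO_C(-B)$ has rank $1$ and degree $-\ell(B)$. Combining with the previous inequality gives exactly the asserted bound.

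\medskip

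\noindent\textbf{Main obstacle.} The subtle point — and the one I would treat most carefully — is justifying that $\textup{Def}([f],\sF;f_{|B})$, as defined analytically via the charts $\phi_i$, really has tangent space $H^0(C,f^*\sF\otimes\sI_B)$ and obstructions in $H^1(C,f^*\sF\otimes\sI_B)$, i.e. that the foliation condition imposes exactly $H^0(C,f^*\sN\otimes\sI_B)$-many independent conditions with obstructions valued appropriately; this is where regularity of $\sF$ (so that $\sF$ and $\sN$ are vector bundles and the $\phi_i$ are genuine submersions) and the Lie-bracket closure enter. One clean way to package this is to observe that near $[f]$ one has an exact sequence of germs fitting $\textup{Def}([f],\sF;f_{|B})$ into $\textup{Hom}(C,X;f_{|B})$ with the difference controlled by $f^*\sN$, and then apply the general lower bound $\dim \ge (\text{tangent dim}) - (\text{obstruction dim})$; alternatively, identify $\textup{Def}([f],\sF;f_{|B})$ locally with a $\textup{Hom}$-scheme relative to the local submersions and invoke the relative version of Mori's estimate. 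Either way, the rest is the Riemann--Roch bookkeeping above.
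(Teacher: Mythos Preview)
Your proposal is correct and follows essentially the same approach as the paper: identify the tangent and obstruction spaces for deformations of $f$ along $\sF$ fixing $B$ as $H^0(C,f^*\sF\otimes\sI_B)$ and $H^1(C,f^*\sF\otimes\sI_B)$, apply the standard bound $\dim \ge h^0 - h^1$, and conclude by Riemann--Roch. The paper simply observes that the completed local ring at $[f]$ pro-represents the functor of infinitesimal deformations along $\sF$ fixing $B$ and then cites Miyaoka's result \cite[Theorem 6.2 and Corollary 6.6]{miyaoka87}, which packages exactly the tangent/obstruction identification you sketch; so your ``main obstacle'' is precisely what is outsourced to that reference.
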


\begin{proof}Let $(\sO,\mathfrak{m})$ be local ring of the germ of analytic space 
$\textup{Def}\big([f],\sF;f_{|B}\big)$ at $[f]$, and let $\widehat{\sO}$ be its $\mathfrak{m}$-adic 
completion.
Then $\widehat{\sO}$ pro-represents the functor of infinitesimal deformations of $[f]$ along $\sF$ with fixed subscheme $B$. We refer to \cite[Section 6]{miyaoka87} for the definition of this functor. The lemma then follows from \cite[Theorem 6.2]{miyaoka87} (see also \cite[Corollary 6.6]{miyaoka87}).
\end{proof}

The proof of Proposition \ref{prop:bend_and_break_foliation} below is very similar to that of 
\cite[Proposition 3.1]{debarre} (see also \cite[Proposition 6.13]{loray_pereira_touzet}), and so we leave some easy details to the reader.

\begin{prop}\label{prop:bend_and_break_foliation}
Let $X$ be a complex projective manifold, and let $\sF \subseteq T_X$ be a regular foliation. Let $f \colon C \to X$ be a smooth complete curve, and let $c$ be a point on $C$. 
If $C\cong \mathbb{P}^1$, suppose that $f(C)$ is transverse to $\sF$ at a general point on $f(C)$.
Suppose furthermore that $\dim_{[f]} \textup{Def}\big([f],\sF;f_{|\{c\}}\big) \ge 1$. There exist a morphism 
$g\colon C \to X$, a nonzero effective rational $1$-cycle $Z$ on $X$ passing through $f(c)$ such that
$f_*C\equiv g_*C+Z$ and such that $\textup{Supp}(Z)$ is tangent to $\sF$.
\end{prop}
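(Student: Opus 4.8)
The plan is to run the classical bend-and-break argument of Mori, but carried out inside the space $\textup{Def}\big([f],\sF;f_{|\{c\}}\big)$ of deformations along $\sF$ rather than in $\textup{Hom}(C,X;f_{|\{c\}})$, so that the limiting cycle is automatically tangent to $\sF$. First I would pass to the Zariski closure $T$ of $\textup{Def}\big([f],\sF;f_{|\{c\}}\big)_{\textup{red}}$ inside $\textup{Hom}\big(C,X;f_{|\{c\}}\big)_{\textup{red}}$; by Lemma \ref{lemma:algebraicity}, $T$ is a projective variety all of whose members evaluate to curves tangent to $\sF$ at every point, so any $1$-cycle obtained as a degeneration within $T$ has support tangent to $\sF$. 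By hypothesis $\dim T \ge 1$, so I may choose a complete irreducible curve $T' \subseteq T$ through $[f]$, normalize it to get a smooth complete curve $\widetilde{T}$, and form the induced morphism $G \colon \widetilde{T} \times C \to X$ with $G(\widetilde{T}\times\{c\})=\{f(c)\}$.

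The next step is the standard rigidity/breaking dichotomy applied to $G$. If the family $T'$ is non-constant but the image $G(\widetilde{T}\times C)$ is a surface, then the curve $\widetilde{T}\times\{c\}$ is contracted by $G$ to the point $f(c)$; by the rigidity lemma (or by computing that a curve contracted to a point on the surface must have negative self-intersection, forcing a component to break), one produces a degenerate fiber of $G$ over some point of $\widetilde{T}$. More precisely, I would look at the surface $S:=G(\widetilde{T}\times C)$ (or its normalization), observe that the curves $G(\{t\}\times C)$ are algebraically equivalent and all pass through $f(c)$, so two general ones meet at $f(c)$, hence meet nowhere else on a minimal resolution — which forces the generic member to be irreducible with nonnegative self-intersection, while a non-general member must degenerate into a reducible/multiple cycle. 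Writing $f_*C \equiv g_*C + Z$ with $g\colon C\to X$ the (normalization of the) non-contracted part of a degenerate fiber and $Z$ the remaining effective $1$-cycle, the point $f(c)$ lies on $g_*C+Z$; if $f(c)$ lies on $g_*C$ we are in the generic fiber and there is nothing degenerate, so after adjusting we get $f(c)\in \textup{Supp}(Z)$ and $Z\neq 0$. The hypothesis that $f(C)$ is transverse to $\sF$ at a general point when $C\cong\mathbb{P}^1$ is exactly what is needed to guarantee that $f$ itself is not already in the "tangent" locus, i.e. that the family $T'$ genuinely moves the curve and the degeneration is nontrivial.

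The remaining point is to rule out the case where $G(\widetilde{T}\times C)$ is all of $X$-dimensional-impossible for a two-parameter family unless $\dim X = 2$, but more to the point where $G$ does not contract $\widetilde{T}\times\{c\}$; but this cannot happen because every member of $T$ sends $c$ to the fixed point $f(c)$, so $\widetilde{T}\times\{c\}$ is literally constant. Thus the only alternative to breaking is that $T'$ is a constant family, contradicting $\dim T \ge 1$ together with $T'\subseteq T$ non-constant (which we may arrange). I expect the main obstacle to be the bookkeeping in the breaking step: ensuring that the limit cycle $g_*C+Z$ genuinely has $Z\neq 0$ and that $f(c)\in\textup{Supp}(Z)$ rather than $f(c)$ sitting only on $g_*C$. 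This is handled exactly as in \cite[Proposition 3.1]{debarre}: since all members pass through $f(c)$, in the degenerate fiber the point $f(c)$ is a point of intersection of the components, and one assigns to $Z$ the component(s) other than the one carrying the "moving" direction, using that the generic fiber is irreducible of nonnegative self-intersection on the resolved surface so that a genuine degeneration must occur. I would cite \cite{debarre} and \cite[Proposition 6.13]{loray_pereira_touzet} for these details, as the statement itself invites.
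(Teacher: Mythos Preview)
Your overall strategy---pass to the Zariski closure via Lemma \ref{lemma:algebraicity} and run bend-and-break there---matches the paper's, but two of the steps do not go through as written.

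First, the Zariski closure $T$ is taken inside $\textup{Hom}\big(C,X;f_{|\{c\}}\big)_{\textup{red}}$, which is only quasi-projective; $T$ is therefore not projective, and there is no reason a complete curve through $[f]$ should exist inside $T$. The paper instead picks a one-dimensional subvariety of $T$, normalizes it, takes a smooth \emph{compactification} $\overline{T}$, and then resolves the indeterminacies of the resulting rational map $ev\colon C\times\overline{T}\dashrightarrow X$ by blowing up points, $\varepsilon\colon S\to C\times\overline{T}$. The rigidity lemma then forces an indeterminacy point $(c,t_0)$, and the breaking cycle $Z$ is the image in $X$ of the $\varepsilon$-exceptional curve over that point. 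Without compactifying, no such degeneration appears, and your morphism $G\colon \widetilde{T}\times C\to X$ cannot be both everywhere defined and over a complete base.

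Second, and more substantively, you misread Lemma \ref{lemma:algebraicity}. It does \emph{not} say that each $[h]\in T$ has image $h(C)$ tangent to $\sF$; indeed under the hypotheses of the proposition $f(C)$ itself is transverse to $\sF$. What the lemma says is that for each fixed $y'\in C$ the evaluation locus $ev\big(T\times\{y'\}\big)$ is tangent to $\sF$: tangency is in the \emph{family} direction, not along the individual curves. So your inference ``any $1$-cycle obtained as a degeneration within $T$ has support tangent to $\sF$'' is unjustified as stated. The paper's argument here is the actual content of the proof: one pulls $\sF$ back to a foliation $\sG$ on $C\times\overline{T}$ and then to $\sG_S:=\varepsilon^{-1}(\sG)$ on $S$; in the transverse case $\sG$ is precisely the foliation by fibers of the projection $C\times\overline{T}\to C$. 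Every $\varepsilon$-exceptional curve is contracted to a point and hence lies in such a fiber, so it is tangent to $\sG_S$, and therefore its image under $e$ is tangent to $\sF$. This is also where the transversality hypothesis does its real work: it guarantees that $\textup{Aut}(C,c)\cdot[f]$ meets the Zariski closure in only finitely many points, so that $\dim e(S)=2$ and the rigidity/negativity argument genuinely produces an exceptional curve over $(c,t_0)$.
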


\begin{proof}
Denote by 
${\overline{\textup{Def}\big([f],\sF;f_{|\{c\}}\big)}}_{\textup{red}} 
\subset {\textup{Hom}\big(Y,X;f_{|\{c\}}\big)}_{\textup{red}}$ the Zariski closure of 
${\textup{Def}\big([f],\sF;f_{|\{c\}}\big)}_{\textup{red}}$.
Let $T \to {\overline{\textup{Def}\big([f],\sF;f_{|\{c\}}\big)}}_{\textup{red}}$ be the normalization of a $1$-dimensional subvariety
passing through $[f]$, and let $\overline{T}$ be a smooth compactification. 
Let $e\colon S \overset{\varepsilon}{\to} C \times\overline{T} \overset{ev}{\dashrightarrow} X$
be a resolution of the indeterminacies of the rational map
$ev\colon C \times \overline{T} \dashrightarrow X$ coming from $T \to \textup{Hom}\big(C,X;f_{|\{c\}}\big)$, where
$\epsilon \colon S \to C \times \overline{T}$ is obtained by blowing-up points.
From the rigidity lemma, we conclude that there exists a point $t_0 \in \overline{T}$ such that $ev$ is not defined at $(c,t_0)$. The fiber of $t_0$ under the projection $S \to \overline{T}$ is the union of the strict transform
of $C \times \{t_0\}$ and a (connected) exceptional rational $1$-cycle $E$ which is not entirely contracted by $e$ and meets the strict transform of $\{c\}\times \overline{T}$. Since the latter is contracted by $e$ to the point
$f(c)$, the rational $1$-cycle $Z:=e_*E$ passes throuh $f(c)$.

By Lemma \ref{lemma:algebraicity}, ${\overline{\textup{Def}\big([f],\sF;f_{|\{c\}}\big)}}_{\textup{red}}$
parametrizes deformations of $[f]$ along $\sF$. 
Therefore, if $C$ is transverse to $\sF$ at a general point on $C$, $\textup{Aut}(C,c)\cdot [f]$ and 
${\overline{\textup{Def}\big([f],\sF;f_{|\{c\}}\big)}}_{\textup{red}}$ intersect at finitely many points in $\textup{Hom}\big(C,X;f_{|\{c\}}\big)$. If $C$ is irrational, then the orbit
$\textup{Aut}(C,c)\cdot [f]$ is finite because the 
group $\textup{Aut}(C,c)$ is. In either case, we conclude that
$\dim e(S)=2$. 

Let $\sG\subseteq T_{C \times \overline{T}}$ be the foliation on $C \times \overline{T}$ induced by $ev^*\sF \cap T_{C\times\overline{T}}$, and set $\sG_S:=\varepsilon^{-1}(\sG)$.
If $C$ is tangent to $\sF$, then $\sG=T_{C\times\overline{T}}$ (and hence $\sG_S = T_S$).
If $C$ is transverse to $\sF$ at a general point on $C$, then 
$\sG$ is induced by the projection $C\times \overline{T} \to C$.
In either case, any $\varepsilon$-exceptional curve is tangent to $\sG_S$. Hence 
$\textup{Supp}(Z)$ is tangent to $\sF$. This completes the proof of the proposition.
\end{proof}

\begin{proof}[Proof of Proposition \ref{prop:bend_and_break_foliation2}]
Let $X$ be a complex projective manifold, and let $\sF \subsetneq T_X$ be a regular foliation with normal bundle $\sN$. Let 
$C \subset X$ be a rational curve with $\det(\sN)\cdot C \neq 0$, and let $x$ be a point on $C$. 
Suppose that $- K_\sF\cdot C \ge 1$.
Let $f\colon \p^1 \to C \subset X$ be the normalization morphism, and let $p\in\p^1$ such that $f(p)=x$. 
Notice that $C$ is tranverse to $\sF$ at a general point on $C$ by 
Lemma \ref{lemma:normal_bundle_restricted_leaf}.
By Lemma \ref{lemma:dimension}, we have 
$$\dim_{[f]} \textup{Def}\big([f],\sF;f_{|\{p\}}\big) \ge -K_\sF\cdot C \ge 1$$
so that Proposition \ref{prop:bend_and_break_foliation} applies. There exist a morphism 
$g\colon \p^1 \to X$ and a nonzero effective rational $1$-cycle $Z$ on $X$ such that
$f_*\p^1\equiv g_*\p^1+Z$, and such that $\textup{Supp}(Z)$ is tangent to $\sF$.
From Lemma \ref{lemma:normal_bundle_restricted_leaf} again, we deduce that 
$\det(\sN)\cdot Z=0$. Thus
$$0\neq \det(\sN)\cdot f_*\p^1 = \det(\sN)\cdot g_*\p^1 +\det(\sN)\cdot Z = \det(\sN)\cdot g_*\p^1.$$
In particular, $g$ is a nonconstant morphism. 
Set $C_1:=g(\mathbb{P}^1)$ and $m:=\deg(g)$. Then $C\equiv mC_1+Z$,
completing the proof of the proposition.
\end{proof}

We now provide a technical tool for the proof of the main result.

\begin{cor}\label{cor:conormal_nef}
Let $X$ be a complex projective manifold, and let $\sF \subsetneq T_X$ be a regular foliation with normal bundle $\sN$. 
Suppose that $-K_X$ is nef. If $C \subset X$ is a rational curve, then $\det(\sN) \cdot C \ge 0$.
\end{cor}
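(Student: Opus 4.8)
The plan is to argue by contradiction: suppose there is a rational curve $C \subset X$ with $\det(\sN) \cdot C < 0$. The idea is to use Proposition \ref{prop:bend_and_break_foliation2} repeatedly to break $C$ into pieces while keeping track of the intersection number with $\det(\sN)$, ultimately arriving at a rational curve on which $\det(\sN)$ is still negative but which cannot be broken further, forcing a numerical contradiction. The key numerical input is the relation $-K_X = -K_\sF - K_{\sN}$, where $K_{\sN}$ is a divisor with $\sO_X(-K_{\sN}) \cong \det(\sN)$; thus $-K_\sF \cdot C = -K_X \cdot C + \det(\sN)\cdot C$.

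First I would set up a minimal-degree argument. Among all rational curves $C'$ with $\det(\sN)\cdot C' < 0$, choose one (not necessarily irreducible a priori — but we can pass to a component) minimizing $-K_X \cdot C'$, which is $\ge 0$ since $-K_X$ is nef. Since $\det(\sN)\cdot C \ne 0$ and $-K_X \cdot C \ge 0$, we get $-K_\sF \cdot C = -K_X \cdot C + \det(\sN)\cdot C$; the trouble is this could be $\le 0$, so I cannot directly invoke Proposition \ref{prop:bend_and_break_foliation2}, which requires $-K_\sF \cdot C \ge 1$. Here is where I would instead consider a \emph{multiple} or a free rational curve: by Lemma \ref{lemma:normal_bundle_restricted_leaf}, $C$ is not tangent to $\sF$ (a curve tangent to $\sF$ has $\det(\sN)\cdot C = 0$), so $C$ moves transversally; more precisely I would use Mori-type bend-and-break on $X$ itself to reduce to the case where $C$ is a rational curve with $-K_X\cdot C \le \dim X + 1$, and then pass to a high multiple. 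The cleaner route: pick $x \in C$ and consider $f\colon \mathbb{P}^1 \to C$; if $-K_\sF \cdot C \le 0$, replace $f$ by a cover — but degree of $\det(\sN)$ scales too, so that does not help the sign.

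The honest approach, and the one I expect the author takes, is: apply Proposition \ref{prop:bend_and_break_foliation2} whenever $-K_\sF \cdot C \ge 1$. If $-K_\sF\cdot C \ge 1$, we get $C \equiv mC_1 + Z$ with $\operatorname{Supp}(Z)$ tangent to $\sF$, hence $\det(\sN)\cdot Z = 0$ by Lemma \ref{lemma:normal_bundle_restricted_leaf}, so $\det(\sN)\cdot C_1 = \tfrac{1}{m}\det(\sN)\cdot C < 0$ and also $-K_X \cdot C_1 \le -K_X\cdot C$ with equality only if $m=1$ and $Z\cdot(-K_X) = 0$. Iterating, the quantity $-K_X\cdot C$ (a nonnegative integer) strictly decreases at each breaking step unless we are stuck, so after finitely many steps we reach a rational curve $C_*$ with $\det(\sN)\cdot C_* < 0$ for which Proposition \ref{prop:bend_and_break_foliation2} does \emph{not} apply, i.e. $-K_\sF\cdot C_* \le 0$. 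But then $-K_X\cdot C_* = -K_\sF\cdot C_* - \det(\sN)\cdot C_* \le 0 - \det(\sN)\cdot C_* $; combined with $-K_X\cdot C_* \ge 0$ and $\det(\sN)\cdot C_* \le -1$, this gives $0 \le -K_X\cdot C_* \le -\det(\sN)\cdot C_*$, which is not yet a contradiction. The main obstacle is precisely closing this last gap: I would resolve it by instead running the breaking argument on a \emph{free} rational curve or by deforming, using that transversality of $C_*$ to $\sF$ (Lemma \ref{lemma:normal_bundle_restricted_leaf}) plus $-K_X$ nef forces, after attaching enough free curves through a point, a curve with $-K_\sF \ge 1$ while preserving $\det(\sN) < 0$ — then the infinite descent on $-K_X \cdot (\,\cdot\,)$ cannot terminate, contradiction. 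So the delicate point is the bookkeeping that guarantees the descent is strict and that the "tangent part" $Z$ never absorbs all of the negativity; Lemma \ref{lemma:normal_bundle_restricted_leaf} (characteristic classes of $f^*\sN$ vanish on anything tangent to $\sF$) is exactly the tool that prevents that.
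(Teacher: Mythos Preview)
Your argument founders on a sign error in the very first numerical relation. From the exact sequence $0\to\sF\to T_X\to\sN\to 0$ one has $\det(T_X)\cong\det(\sF)\otimes\det(\sN)$, i.e.\ $-K_X=-K_\sF+\det(\sN)$ as divisor classes, and hence
\[
-K_\sF\cdot C \;=\; -K_X\cdot C \;-\; \det(\sN)\cdot C,
\]
not $-K_X\cdot C+\det(\sN)\cdot C$ as you wrote. With the correct sign, the hypothesis $\det(\sN)\cdot C<0$ together with nefness of $-K_X$ gives immediately $-K_\sF\cdot C\ge 0-(-1)=1$, so Proposition~\ref{prop:bend_and_break_foliation2} applies \emph{directly} to $C$. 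All of your subsequent manoeuvres (passing to covers, attaching free curves, worrying about the case $-K_\sF\cdot C\le 0$) are attempts to circumvent a difficulty that does not exist; none of them is carried to completion, and they are not needed.

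Once the sign is fixed the paper's argument runs as you outlined in the middle paragraph: break $C\equiv mC_1+Z$ with $\textup{Supp}(Z)$ tangent to $\sF$, use Lemma~\ref{lemma:normal_bundle_restricted_leaf} to get $\det(\sN)\cdot Z=0$ and hence $\det(\sN)\cdot C_1=\tfrac{1}{m}\det(\sN)\cdot C<0$, and iterate. One further correction: you propose to descend on the nonnegative integer $-K_X\cdot C$, but this need not strictly decrease (you yourself note the equality case $m=1$, $-K_X\cdot Z=0$). The paper instead fixes an ample divisor $H$ and descends on $H\cdot C$: since $Z$ is a nonzero effective cycle, $H\cdot Z>0$, so $H\cdot C_1<H\cdot C$ strictly, and the process must terminate, yielding the contradiction.
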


\begin{proof}Set $\sL:=\det(\sN)$, and pick an ample divisor $H$ on $X$.
We argue by contradiction, and assume that $\sL\cdot C<0$ for some rational curve $C$. We have
$-K_\sF\cdot C = -K_X\cdot C -\sL\cdot C \ge 1$
so that Proposition \ref{prop:bend_and_break_foliation2} applies.
There exist a nonzero effective rational $1$-cycle $Z$, a rational curve $C_1$, and a positive integer $m$ such that $C \equiv mC_1+Z$ and such that
$\textup{Supp}(Z)$ is tangent to $\sF$. 
Notice that $H\cdot C_1< H\cdot C$.
By Lemma \ref{lemma:normal_bundle_restricted_leaf}, we have
$$\sL\cdot C_1=\frac{1}{m}\sL\cdot(mC_1+Z)=\frac{1}{m}\sL\cdot C<0.$$
This construction yields an infinite sequence of rational curves on $X$ with 
decreasing $H$-degrees. This is absurd and the corollary is proved.
\end{proof}

Let $X$ be a complex projective manifold and consider the finite dimensional $\mathbb{R}$-vector space 
$$\N_1(X)=\big(\{1-\text{cycles}\}/\equiv\big)\otimes\mathbb{R},$$
where $\equiv$ denotes numerical equivalence. Recall that the \emph{Mori cone} of $X$ is the closure 
$\NE(X)\subset \N_1(X)$ of the cone spanned by classes of effective curves. 
We believe that the following result will be useful when considering regular foliations on arbitrary projective manifold. Its proof is similar to that of Corollary \ref{cor:conormal_nef} above.

\begin{cor}\label{cor:extremal_rays}
Let $X$ be a complex projective manifold, and let $\sF \subsetneq T_X$ be a regular foliation with normal bundle $\sN$. Let 
$C \subset X$ be a rational curve with $\det(\sN)\cdot C \neq 0$. 
If $[C]\in \NE(X)$ generates an extremal ray,
then $K_\sF\cdot C \ge 0$.
\end{cor}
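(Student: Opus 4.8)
The plan is to argue by contradiction, mimicking the proof of Corollary \ref{cor:conormal_nef} but using the structure of the extremal ray to control the degrees. Suppose $K_\sF\cdot C<0$, i.e. $-K_\sF\cdot C\ge 1$. Then Proposition \ref{prop:bend_and_break_foliation2} applies: there exist a nonzero effective rational $1$-cycle $Z$ through a chosen point of $C$, a rational curve $C_1$, and a positive integer $m$ with $C\equiv mC_1+Z$ and $\textup{Supp}(Z)$ tangent to $\sF$. By Lemma \ref{lemma:normal_bundle_restricted_leaf}, characteristic classes of $\sN$ restricted to any subvariety tangent to $\sF$ vanish, so $\det(\sN)\cdot Z=0$, whence $\det(\sN)\cdot C_1=\tfrac1m\det(\sN)\cdot C\neq 0$; in particular $C_1$ is a genuine curve and $[C_1]\neq 0$ in $\N_1(X)$.

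Next I would exploit extremality. Fix an ample divisor $H$. Since $[C]=m[C_1]+[Z]$ lies on the extremal ray $R:=\mathbb{R}_{\ge 0}[C]$ and both $m[C_1]$ and $[Z]$ are classes of effective $1$-cycles (hence in $\NE(X)$), extremality forces $[C_1]\in R$ and $[Z]\in R$. Write $[C_1]=\lambda[C]$ with $\lambda>0$; then $H\cdot C_1=\lambda\, H\cdot C$, and from $H\cdot C=m\,H\cdot C_1+H\cdot Z=m\lambda\,H\cdot C+H\cdot Z$ with $H\cdot Z>0$ (as $Z\ne 0$ is effective) we get $m\lambda<1$, so $0<H\cdot C_1<H\cdot C$. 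Moreover $\det(\sN)\cdot C_1=\tfrac1m\det(\sN)\cdot C\neq 0$, so $C_1$ again satisfies the hypotheses of the statement. I would also need $-K_\sF\cdot C_1\ge 1$ to reapply Proposition \ref{prop:bend_and_break_foliation2}: since $[C_1]=\lambda[C]$ and $\lambda m<1$, we have $-K_\sF\cdot C_1=\lambda(-K_\sF\cdot C)$, and this need not be $\ge 1$ a priori, which is the delicate point below.

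Granting that one can iterate, the construction produces an infinite sequence of rational curves on $X$ with strictly decreasing positive $H$-degrees, a contradiction, and the corollary follows. The main obstacle is precisely the last worry: after breaking, the new curve $C_1$ lies on the extremal ray and has smaller $H$-degree, but the quantity $-K_\sF\cdot C_1=\lambda(-K_\sF\cdot C)$ could drop below $1$, so Proposition \ref{prop:bend_and_break_foliation2} may not directly reapply to $C_1$. To handle this I would pass to a multiple: if $C_1$ is the rational curve and $\ell\ge 1$ is minimal with $-K_\sF\cdot(\ell C_1)=-K_\sF\cdot(\text{a rational curve of class }\ell[C_1])\ge 1$—more carefully, I would work with a minimal-degree rational curve $C'$ generating $R$ and with $\det(\sN)\cdot C'\neq 0$ (it exists since $\det(\sN)\cdot C\ne 0$ and classes on $R$ are positive multiples of a primitive class, so $\det(\sN)$ is nonzero on every nonzero effective class in $R$), note $-K_\sF\cdot C'>0$ hence $\ge 1$ by integrality of $K_\sF\cdot C'$ (recall $K_\sF$ is a Weil divisor, so $K_\sF\cdot C'\in\mathbb{Z}$), and then break $C'$ itself; the resulting $C_1'$ again lies on $R$ with $H\cdot C_1'<H\cdot C'$ and $\det(\sN)\cdot C_1'\ne 0$, and replacing it by the minimal-degree rational curve on $R$ one gets a strictly decreasing sequence of positive integers, the desired contradiction. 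Thus the real content is the bookkeeping that extremality keeps everything on the ray $R$ and that on $R$ the divisor $\det(\sN)$ is nonzero on every nonzero class and $K_\sF$ takes integral values, forcing $-K_\sF\cdot(\text{minimal rational curve})\ge 1$ at each stage.
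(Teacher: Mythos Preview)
Your approach is essentially the paper's: assume $-K_\sF\cdot C\ge 1$, apply Proposition~\ref{prop:bend_and_break_foliation2}, use extremality of $R=\mathbb{R}^+[C]$ to keep all components on the ray, and iterate to produce rational curves of strictly decreasing $H$-degree. The argument is correct.

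However, the ``delicate point'' you flag is not actually delicate, and your detour through a minimal-degree rational curve is unnecessary. Since $X$ is smooth, $K_\sF$ is Cartier, so $K_\sF\cdot C_1\in\mathbb{Z}$ for \emph{any} curve $C_1$. As $[C_1]\in\mathbb{R}^+[C]$ and $-K_\sF\cdot C>0$, you get $-K_\sF\cdot C_1>0$, hence $-K_\sF\cdot C_1\ge 1$ immediately. The same observation gives $\det(\sN)\cdot C_1\neq 0$ directly from $[C_1]\in\mathbb{R}^+[C]$, so you do not even need the computation via Lemma~\ref{lemma:normal_bundle_restricted_leaf} for this step. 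This is exactly how the paper proceeds: it simply asserts ``Since $[C_1]\in \mathbb{R}^+[C]$, we must have $-K_\sF\cdot C_1 \ge 1$'' and iterates. Your minimal-degree variant also works (and in fact terminates in one step: breaking a minimal $C'$ already yields a contradiction), but it is more bookkeeping than the problem requires.
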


\begin{proof}
Pick an ample divisor $H$ on $X$.
Let us assume to the contrary that $- K_\sF\cdot C \ge 1$. 
 By Proposition \ref{prop:bend_and_break_foliation2},
$C$ is numerically equivalent to a connected nonintegral effective rational $1$-cycle.
Thus, there exists a rational curve $C_1$ on $X$ with $[C_1]\in \mathbb{R}^+[C]$ and
such that $H \cdot C_1< H\cdot C$. Since $[C_1]\in \mathbb{R}^+[C]$, we must have $-K_\sF\cdot C_1 \ge 1$. 
This construction yields an infinite sequence of rational curves on $X$ with 
decreasing $H$-degrees. This is absurd, proving the corollary.
\end{proof}

\section{Proof of Theorem \ref{thm:main}}

We are now in position to prove our main result. 

\begin{proof}[Proof of Theorem \ref{thm:main}]
Set $\sN=T_X/\sF$, and denote by $q$ its rank.
Suppose that $0 <q<\dim X$, and set $\sL=\det(\sN)$.

By the cone theorem, there exist finitely many rational curves $C_1,\ldots,C_m$
such that $$\NE(X)=\mathbb{R}^+[C_1]+\cdots +\mathbb{R}^+[C_m]$$
where the $\mathbb{R}^+[C_i]$ are the extremal rays of $\NE(X)$ (\cite[Theorem 3.7]{kollar_mori}). 
By Corollary \ref{cor:conormal_nef}, $\sL\cdot C_i \ge 0$ for any $1\le i\le m$, and thus $\sL$ is nef.
By the base-point-free theorem (see \cite[Theorem 3.3]{kollar_mori}), the line bundle $\sL^{\otimes m}$ is globally generated for all integers $m$ sufficiently large. Let $\phi \colon X \to Y$ be the induced morphism. 

We will show that $\sF$ is induced by $\phi$. By \cite[Corollary 3.4]{baum_bott70}, we have $\sL^{q+1} \equiv 0$, and hence
$\dim Y \le q$. 
Let $F$ be a general fiber of $\phi$. Notice that $F$ is a smooth projective variety with $-K_F = (-K_X)_{|F}$ nef and big by the adjunction formula, and that $\sL_{|F}\equiv 0$. 
By \cite{zhang_rcc}, $F$ is simply connected and
$h^0(F,\Omega_F^{i}) = 0$ for all $1\le i\le \dim F$, 
so that
Lemma \ref{lemma:first_integral} applies. We have $\dim Y \ge q$, and
$F$ is tangent to $\sF$. This in turn implies that $\dim Y = q$, and that $\sF$ is induced by $\phi$. 
By Lemma \ref{lemma:smoothness} below, we infer that $\phi$ is a smooth morphism,
completing the proof of the theorem.
\end{proof}

\begin{lemma}\label{lemma:smoothness}
Let $X$ be a complex projective manifold, and let $\phi\colon X \to Y$ be a surjective morphism with connected fibers onto a normal projective variety $Y$. Suppose that $-K_X$ is $\phi$-nef and $\phi$-big. Suppose furthermore that the foliation 
$\sF$ on $X$ induced by $\phi$ is regular. Then $\phi$ is a smooth morphism.
\end{lemma}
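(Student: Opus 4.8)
The plan is to show that $\phi$ is flat with smooth fibers, which together give smoothness. Since $X$ and $Y$ are both normal (indeed $X$ is smooth) and $Y$ has been produced, in the application, as the target of a base-point-free contraction, it is reasonable to first reduce to the case where $Y$ is smooth; in any event the key point is local over $Y$, so I would fix a point $y \in Y$ and work with the fiber $F_y = \phi^{-1}(y)$. The regularity of $\sF$ means $\sF \subseteq T_X$ is a subbundle everywhere, and since $\sF$ is induced by $\phi$, the tangent spaces to $\sF$ agree with the tangent spaces to the fibers over the locus where $\phi$ is smooth. The first step is therefore to observe that $\dim \sF = \operatorname{rank}\sF$ is constant, equal to $\dim X - \dim Y$, so by upper semicontinuity of fiber dimension every fiber has dimension exactly $\dim X - \dim Y$; hence $\phi$ is equidimensional, and since $X$ is Cohen–Macaulay (smooth) and $Y$ is normal, $\phi$ is flat (miracle flatness).

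The second and main step is to show each fiber $F_y$ is smooth. Here I would use the foliation: let $L$ be the leaf of $\sF$ through a point $x \in F_y$. Since $\sF$ is regular, $L$ is a genuine submanifold through $x$ of dimension $\operatorname{rank}\sF = \dim F_y$, and because $\sF$ is induced by $\phi$, the leaf $L$ is contained in $F_y$ (locally, $L$ is a connected component of a $\phi$-fiber). Thus $F_y$ contains, through every one of its points, a smooth analytic germ of the full dimension $\dim F_y = \dim_x F_y$. Since $F_y$ is a variety of pure dimension $\dim F_y$ and is cut out set-theoretically near $x$ by this leaf, I would argue that $T_x F_y$ has dimension exactly $\dim F_y$: indeed $T_x L \subseteq T_x F_y$ and $T_x L = \sF_x$ has dimension $\dim F_y$, while $T_x F_y = \ker(d\phi_x) $ restricted appropriately also has the expected dimension because, by flatness, $F_y$ is a local complete intersection in $X$ of codimension $\dim Y$, so $\dim_{\mathbb{C}} T_x F_y \le \dim_x X - \dim Y + (\text{something})$—more cleanly, flatness plus $X$ smooth gives $F_y$ a local complete intersection, and a point $x$ where $T_xF_y$ had dimension $> \dim F_y$ would be a singular point, yet the leaf through $x$ already fills out a smooth $\dim F_y$-dimensional submanifold, forcing $F_y$ to be smooth there. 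An alternative and perhaps cleaner route for this step: on $X^\circ := X$ (regularity) the map $d\phi \colon T_X \to \phi^*T_Y$ (or rather the map to the relative cotangent complex) has kernel exactly $\sF$, a subbundle, so the cokernel $N$ of $\sF \hookrightarrow T_X$ is a bundle of rank $\dim Y$; comparing $N$ with $\phi^*\Omega_Y^\vee$ along fibers shows $d\phi_x$ is surjective for every $x$, i.e. $\phi$ is a submersion, hence smooth.

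The hardest part will be step two: turning "each fiber contains a smooth germ of the right dimension through each point" into "each fiber is smooth", i.e. ruling out that $F_y$ has an embedded or thickened structure, or extra tangent directions, at a point lying on a leaf. The resolution is that regularity of $\sF$ gives $\sF \subseteq T_X$ a \emph{subbundle} of constant rank, so the morphism $T_X \to T_X/\sF$ is surjective with locally free cokernel; the composite $\sF \hookrightarrow T_X \xrightarrow{d\phi} \phi^*T_{Y^{\mathrm{sm}}}$ is zero on the smooth locus of $Y$ and, by saturatedness and the fact that $\sF$ is exactly the relative tangent sheaf generically, one concludes $d\phi$ factors as $T_X \twoheadrightarrow T_X/\sF$ followed by an injection into $\phi^*(\ldots)$, which is an isomorphism by a rank count; thus $d\phi$ has locally constant rank $\dim Y$ everywhere, so $\phi$ is a submersion. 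Once $\phi$ is a submersion of complex manifolds it is automatically a smooth morphism, so the lemma follows, and no separate flatness argument is even needed. I would write the proof along this second route, invoking only the regularity hypothesis and the definition of "induced by $\phi$", with the Cohen–Macaulay/miracle-flatness remark relegated to a parenthetical alternative.
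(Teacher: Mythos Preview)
Your argument has a genuine gap: it never uses the hypothesis that $-K_X$ is $\phi$-nef and $\phi$-big, yet without that hypothesis the conclusion is false. Take an elliptic surface $\phi\colon X\to \mathbb{P}^1$ with a smooth multiple fibre $mE$ (type $mI_0$, $m\ge 2$). Near a point of $E$ one can choose coordinates $(z,w)$ with $E=\{z=0\}$ and $\phi(z,w)=z^m$; the saturation of $\ker d\phi$ is the line subbundle spanned by $\partial_w$, so the induced foliation $\sF$ is regular, but $\phi$ is not smooth. In this example $T_X/\sF$ and $\phi^*T_{\mathbb{P}^1}$ are both line bundles and the induced map between them is $\partial_z\mapsto m z^{m-1}\partial_t$, which vanishes along $E$. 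This is exactly where your ``rank count'' step fails: a morphism between vector bundles of the same rank that is generically an isomorphism need not be an isomorphism; its determinant can vanish, and that vanishing locus is precisely where $d\phi$ drops rank. Your first route has the same problem in disguise: the leaf through $x$ coincides with $(\phi^{-1}(y))_{\textup{red}}$ locally, so you only learn that the \emph{reduced} fibre is smooth, not that the scheme-theoretic fibre is reduced. (Miracle flatness also requires $Y$ regular, which you have not established.)

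The paper's proof confronts exactly this issue of multiple fibres. Since $\sF$ is regular, the holomorphic Reeb stability theorem (as in \cite{hwang_viehweg}) gives, over a neighbourhood of any reduced fibre $F_0$, a local model $\widehat U\to S$ which is a smooth proper fibration, together with a Galois cover $\widehat U\to U$ with group the holonomy group $G$ of $F_0$; the obstruction to $\phi$ being smooth near $F_0$ is precisely $G\neq\{1\}$. One then uses the hypothesis: pulling back $-K_X$ shows $-K_{\widehat F_0}$ is nef and big, hence so is $-K_{F_0}$ (via the \'etale cover $\widehat F_0\to F_0$), and Zhang's theorem \cite{zhang_rcc} forces $\pi_1(F_0)=\{1\}$, so $G$ is trivial. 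That is the missing idea in your proposal: regularity controls the reduced fibres, but ruling out multiplicity requires the positivity of $-K_X$ along the fibres.
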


\begin{proof}Pick $x \in X$, and set 
$y:=\phi(x)$ and
$F_0:=\phi^{-1}(y)_{\textup{red}}$.  
By \cite[Proposition 2.5]{hwang_viehweg}, $F_0$ has finite holonomy group $G$.
By the holomorphic version of Reeb stability theorem (see \cite[Theorem 2.4]{hwang_viehweg}), there exist a saturated open neighborhood $U$ of $F_0$ in $X$ with respect to the Euclidean topology, a (local) transversal section $S$ at $x$ with a $G$-action, an unramified Galois cover $\widehat{U} \to U$ with group $G$, a smooth proper $G$-equivariant morphism $\widehat{U} \to S$, an isomorphism $S/G \cong \phi(U)$, and a commutative diagram:

\centerline{
\xymatrix{
\widehat{U} \ar[r]^{p}\ar[d]_{\widehat{\phi}} & U \ar[d]^{\phi}, \\
 S \ar[r]_-{q} & S/G\cong \phi(U).\\
}
}

Recall that $G$ is given by the holonomy representation
$$\pi_1(F_0,x) \to \textup{Diff}(S,x).$$
Set $\widehat{F}_0:=\widehat{\phi}^{-1}(x)_{\textup{red}}$, and consider a general fiber $\widehat{F}$ of
$\widehat{\phi}$.
Notice that $-K_{\widehat{U}} \cong -p^*K_U$ is $\widehat{\phi}$-nef and $\widehat{\phi}$-big. It follows that
$-K_{\widehat{F}}$ is nef and big. Since 
$K_{\widehat{F}_0}^{\dim \widehat{F}_0} =  K_{\widehat{F}}^{\dim \widehat{F}}$, we infer that
$-K_{\widehat{F}_0}$ is nef and big as well. Since the restriction of $q$ to $\widehat{F}_0$ induces an \'etale morphism $q_{|\widehat{F}_0} \colon \widehat{F}_0 \to F_0$ of projective manifolds, we conclude that 
$-K_{F_0}$ is also nef and big. By \cite{zhang_rcc}, we must have $\pi_1(F_0,x)=\{1\}$. Therefore, the holonomy 
group $G$ is trivial, and $\phi$ is a smooth morphism. This proves the lemma. 
\end{proof}

\begin{question}
Let $X$ be a complex projective manifold, and let $\sF$ be a regular foliation on $X$. Suppose that $h^1(X,\sO_X)=0$, and that $-K_X$ is nef.
Is $\sF$ algebraically integrable?
\end{question}

\providecommand{\bysame}{\leavevmode\hbox to3em{\hrulefill}\thinspace}
\providecommand{\MR}{\relax\ifhmode\unskip\space\fi MR }
\providecommand{\MRhref}[2]{%
  \href{http://www.ams.org/mathscinet-getitem?mr=#1}{#2}
}
\providecommand{\href}[2]{#2}


\end{document}